\newtheoremstyle{theorem}%name
{10pt} % space above
{10pt} % space below
{\sl} % bofy font
{\parindent} % ident - empty=no indent, \parindent= paragraph indent
{\bf} % thm head font
{. } % punctuation after thm head
{ } % space after thm head: `` ``=normal \newline=linebreak
{} % thm head specification
\theoremstyle{theorem}
\newtheorem{theorem}{Theorem}
\newtheoremstyle{defi}%name
{10pt} % space above
{10pt} % space below
{\rm} % bofy font
{\parindent} % ident - empty=no indent, \parindent= paragraph indent
{\bf} % thm head font
{. } % punctuation after thm head
{ } % space after thm head: `` ``=normal \newline=linebreak
{} % thm head specification
\theoremstyle{defi}
\begin{document}

\title{\vspace*{35mm}ASYMPTOTIC RAYS}
\author{Oleksii Kuchaiev$^1$ and Anastasiia Tsvietkova$^2$\\
$^1$Department of Computer Science, \\ Donald Bren School of Information and Computer Sciences, \\ University of
California, Irvine, \\ 3019 Donald Bren Hall, Irvine, \\ CA
92697-3435, USA\\
oleksii.kuchaiev@uci.edu\\[2pt]
$^2$Department of Mathematics, \\ The
University of Tennessee,  \\ 104 Aconda Court, 1534 Cumberland
Avenue, \\ Knoxville, TN 37996-0612, USA\\
tsvietkova@math.utk.edu}

\maketitle

\begin{abstract} We prove
that a graph $\Gamma$ is asymptotically isomorphic to the ray if
and only if $\Gamma$ is uniformly spherically bounded and is of
bounded local degrees. This problem arouse in combinatorics and was posed
in \cite{ballstruct} (Problem 10.1).
\

{\bf AMS Subject Classification:} 54A05, 54E15, 05C60 \

{\bf Key Words and Phrases:} ray, ballean, asymorphism.
\end{abstract}

\thispagestyle{empty}

\

A ray $\mathcal{R}$ is a non-oriented graph with the set of
vertices $ \omega=\{0,1,...\}\ $  and the set of edges $
\{(i,i+1):i\in\omega\}$. An asymptotic ray is a non-oriented
graph, asymptotically isomorphic to the ray. The notion of
asymptotic isomorphism arouse from the following general
combinatoric scheme.

A \textit{ball structure} is a triple $\mathcal{B}=(X,P,B)$ where
$X,P$ are non-empty sets, and for all $ x\in X $ and $ \alpha\in P
$ , $ B(x,\alpha) $ is a subset of $X$ which is called a ball of
radius $\alpha$ around $x$. It is supposed that $x \in
B(x,\alpha)$ for all $x \in X$, $ \alpha \in P$. The set $X$ is
called the \textit{support} of $\mathcal{B}$, $P$ is called the
\textit{set of radiuses}.

Given any $x \in X , A\subseteq X, \alpha \in P$, we put
\begin{eqnarray} \nonumber B^{*}(x,\alpha)=\{y \in X : x \in B(y,\alpha)\}, \
\displaystyle B(A,\alpha)=\bigcup_{a \in A}B(a,\alpha)  . \
\end{eqnarray}

A ball structure $\mathcal{B}=(X,P,B)$ is called
\begin{itemize} \item \textit{lower symmetric} if, $\forall
\alpha, \beta \in P \ \exists \alpha', \beta' \in P$ such
that$\forall x \in X$
\begin{eqnarray}
\nonumber  B^{*}(x,\alpha')\subseteq B(x,\alpha), \
B(x,\beta')\subseteq
 B^{*}(x,\beta);
\end{eqnarray}
\item \textit{upper symmetric} if $\forall \alpha, \beta \in P \
\exists \alpha', \beta' \in P$ such that $\forall x \in X$
\begin{eqnarray}
 \nonumber B(x,\alpha)\subseteq B^{*}(x,\alpha'), \ B^{*}(x,\beta)\subseteq
 B(x,\beta');
\end{eqnarray}
\item \textit{lower multiplicative} if $\forall \alpha,\beta \in
P\  \exists \gamma \in P$ such that $\forall x \in X$
\begin{eqnarray}
\nonumber B(B(x,\gamma),\gamma)\subseteq B(x,\alpha)\cap
B(x,\beta);
\end{eqnarray}
\item \textit{upper multiplicative} if $\forall \alpha,\beta \in
P\  \exists \gamma \in P$ such that $\forall x \in X$
\begin{eqnarray}
\nonumber B(B(x,\alpha),\beta)\subseteq B(x,\gamma);
\end{eqnarray}
\end{itemize} \qquad
Let $\mathcal{B}=(X,P,B)$ be a lower symmetric and lower
multiplicative ball structure. Then the family \
\begin{eqnarray}
\nonumber \{\displaystyle \bigcup_{x \in X}{B(x,\alpha)\times
B(x,\alpha): \alpha \in P}\}
\end{eqnarray}
is a base of entourages for some (uniquely determined) uniformity
on $X$. For information about uniformity and uniform topological
spaces see \cite{burbaki}. On the other hand, if $\textit{U} \subseteq X
\times X$ is a uniformity on $X$, then the ball structure
$(X,\textit{U},B)$ is lower symmetric and lower multiplicative,
where $B(x,U)=\{y \in X:(x,y)\in U\}$. Thus, the lower symmetric
and lower multiplicative ball structures can be naturally
identified with  uniform topological spaces.\ \

\thispagestyle{empty}

A ball structure which is upper symmetric and upper multiplicative
is called \textit{ballean}. The balleans appeared independently in
asymptotic topology \cite{dranish} under name coarse structures and in
combinatorics \cite{ballstruct}. Directly from the definition it follows that
the balleans can be considered as asymptotic counterparts of
uniform topological spaces. For more details about this duality
see \cite{dranish,ballstruct}. The role of morphisms in the category of uniform
topological spaces is played by uniformly continuous mappings, and
that is why it is necessary to define its asymptotic equivalents.\

Let $\mathcal{B}_{1}=(X_{1},P_{1},B_{1})$ and
$\mathcal{B}_{2}=(X_{2},P_{2},B_{2})$ be balleans.\

A mapping $f:X_{1}\rightarrow X_{2}$ is called a
\textit{$\prec$-mapping} if $\forall \alpha \in P_{1}\ \exists
\beta \in P_{2}$ such that:
\begin{eqnarray}
\nonumber f(B_{1}(x,\alpha))\subseteq B_{2}(f(x),\beta).
\end{eqnarray}\ \qquad
A bijection $f:X_{1}\rightarrow X_{2}$ is called an
\textit{asymptotic isomorphism} (briefly \textit{asymorphism})
between $\mathcal{B}_{1}$ and $\mathcal{B}_{2}$ if $f$ and
$f^{-1}$ are $\prec$-mappings, and $\mathcal{B}_{1}$ and
$\mathcal{B}_{2}$ are called asymorphic.\

For an arbitrary ballean $\mathcal{B}=(X,P,B)$ a family $\Im$ of
subsets of the support $X$ is called \textit{uniformly bounded},
if $\exists \alpha \in P$ such that $\forall$ $F\in \Im,
F\subseteq B(x,\alpha)$ for some $x\in X$. A bijection
$f:X_{1}\rightarrow X_{2}$ is an asymorphism between
$\mathcal{B}_{1}$ and $\mathcal{B}_{2}$ iff for any uniformly
bounded family $\Im$ of subsets of $X_{1}$, the family
$f(\Im)=\{f(F): F\in \Im\}$ is uniformly bounded in
$\mathcal{B}_{2}$, and for any uniformly bounded family $\Im'$of
subsets of $X_{2}$, the family $f^{-1}(\Im')=\{f^{-1}(F): F\in
\Im' \}$ is uniformly bounded in $\mathcal{B}_{1}$.\

Every metric space $(X,d)$ determines the ballean
$\mathcal{B}(X,d)=(X,\mathbb{R^{+}},B_{d})$, where
$\mathbb{R^{+}}$ is the set of non-negative real numbers, \

$B_{d}(x,r)=\{y\in X: d(x,y)\leq r \}$.\

A ballean $\mathcal{B}$ is called \textit{metrizable} if
$\mathcal{B}$ is asymorphic to $\mathcal{B}(X,d)$ for some metric
space $(X,d)$. A criterion of metrizability of a ballean can be
found in \cite{ballstruct} (Theorem 9.1).

Every connected graph $\Gamma (V,E)$ with the set of vertices $V$
and the set of edges $E$ determines the metric space $(V,d)$,
where $d(u,v)$ is the length of the shortest path from $u$ to $v$.
Thus, for every graph $\Gamma$ there is a ballean
$\mathcal{B}(\Gamma)=\mathcal{B}(V,d)$ with the support $V$, which
corresponds to $\Gamma$. A ballean $\mathcal{B}$ is called
\textit{a graph ballean} if $\mathcal{B}$ is asymorphic to the
ballean $\mathcal{B}(\Gamma)$ of some connected graph $\Gamma$. A
criterion of the graph ballean can be found in \cite{ballstruct} (Theorem 9.2).
In what follows we will consider only connected graphs.\

The following lemma makes clear  the notion of $\prec$-mapping for
graph balleans.

\thispagestyle{empty}

\textbf{Lemma 1}.\textit{ Let $\Gamma_{1}(V_{1},E_{1})$ and
$\Gamma_{2}(V_{2},E_{2})$ be connected graphs. Then the following
statements are equivalent:
\begin{description}
\item[$(i)$] $f$ is a $\prec$-mapping between
$\mathcal{B}(\Gamma_{1})$ and $\mathcal{B}(\Gamma_{2})$;
\item[$(ii)$] there exists a natural number $m$, such that
$f(B_{1}(v,1))\subseteq B_{2}(f(v),m)$ for every vertex $v\in
V_{1}$, where $B_{1}$ and $B_{2}$ are balls of corresponding
radiuses in $\Gamma_{1}$ and $\Gamma_{2}$; \item[$(iii)$] there
exists a natural number $m$, such that $d_{2}(f(v),f(u))\leq
md_{1}(v,u)$ for any $v,u \in V_{1}$, where $d_{1}$ and $d_{2}$
are metrics on $V_{1}$ and $V_{2}$.
\end{description}}
\begin{proof}
$(i)\Rightarrow (ii)$ follows directly from the
definition of $\prec$-mapping. \

$(ii)\Rightarrow (iii)$ If $d(v,u)=1$, then $u\in B_{1}(v,1)$ so
$d_{2}(f(v),f(u))\leq m$. For any $v,u\in V_{1}$ we choose the
shortest path $v=v_{0},v_{1},...,v_{k}=u$ between $u$ and $v$.
Since $d_{2}(f(v_{i}),f(v_{i+1}))\leq m$ for every $i\in
\{0,...,k-1\}$, we have $d_{2}(f(v),f(u))\leq m\cdot
k=md_{1}(v,u)$.

$(iii)\Rightarrow (i)$. It suffices to notice that $(iii)$ is
equivalent to: $f(B_{1}(v,k))\subseteq B_{2}(f(v),m\cdot k)$ for
every $v\in V_{1}$.
\end{proof}

Thus, $\prec$-mappings of graph balleans is the Lipschitz mappings
between the metric spaces of the corresponding graphs. \

A graph $\Gamma (V,E)$ is called \textit{bounded} if there exists
a natural number $m$ such that $d(u,v)\leq m$ for all $u,v\in V$.
If $\Gamma_{1}(V_{1},E_{1})$, $\Gamma_{2}(V_{2},E_{2})$ are graphs
and $\Gamma_{1}$ is bounded, then $\mathcal{B}(\Gamma_{1})$ is
asymorphic with $\mathcal{B}(\Gamma_{2})$ iff $\Gamma_{2}$ is
bounded and $|V_{1}|=|V_{2}|$. Hence, the problem of asymorphism
between graph balleans concerns unbounded graphs only. \

\thispagestyle{empty}

The simplest example of unbounded graph is a ray - a non-oriented
graph $\mathcal{R}$ with a set the vertices $ \omega=\{0,1,...\}\
$ and the set of edges $ \{(i,i+1):i\in\omega\}$. We say that a
graph $\Gamma$ is an \textit{asymptotic ray} if the balleans
$\mathcal{B}(\Gamma)$ and $\mathcal{B}(\mathcal{R})$ are
asymorphic. Remind that the degree $\rho(v)$ of a vertex $v$ of a
graph $\Gamma$ is the number of edges incident to $v$.\

\textbf{Lemma2}.\textit{ Let $\Gamma (V,E)$ be an asymptotic ray.
Then there exists a natural number $m$, such that $\rho(v)\leq m$
for every $v\in V$.}
\begin{proof}. Fix a bijection
$f:V\rightarrow \omega$, which is a $\prec$-mapping between
$\mathcal{B}(\Gamma)$ and $\mathcal{B}(\mathcal{R})$. Choose a
natural number $k$ such that $f(B(v,1))\subseteq B'(f(v),k)$ for
every $v\in V$, where $B$ and $B'$ are balls of corresponding
radiuses in $\Gamma$ and $\mathcal{R}$. It is obvious that
$|B'(u,k)|\leq 2k+1$ for all $u \in \omega$. Since $f$ is a
bijection then $|B(v,1)|\leq 2k+1$ for all $v\in V$, so we can put
$m=2k$.
\end{proof}

Let $\Gamma (V,E)$ be an arbitrary graph. For any $v\in V$ and
$k\in \omega$, we put $S(v,k)=\{u\in V:d(u,v)=k\}$. An injective
sequence of vertices $(v_{n})_{n\in \omega}$ of $\Gamma$ is called
\textit{an arrow}, which starts from the vertex $v_{0}$, if
$(v_{i},v_{i+1})\in E$ and $v_{i}\in S(v_{0},i)$ for all $i\in
\omega$. A graph $\Gamma$ is called locally finite if the degrees
of all its vertices are finite.  By K\o nig lemma, from each
vertex of an infinite locally finite graph starts at least one
arrow. In view of this remark and Lemma2, the following theorem
gives the characterization of asymptotic rays.

\begin{theorem}
Let $\Gamma(V,E)$ be an infinite graph,
$s$ be a natural number such that $\rho(v)\leq s$ for all $v\in
V$, and let $(a_{n})_{n\in \omega}$ be an arrow in $\Gamma$,
$A=\{a_{n}:n\in \omega \}$. Then the following statements are
equivalent:
\begin{description}
\item[$(i)$] $\Gamma$ is an asymptotic ray; \item[$(ii)$] there
exists a natural number $r$ such that $V=B(A,r)$; \item[$(iii)$]
the family $\{S(a_{0},n): n\in \omega \}$ of subsets of $V$ is
uniformly bounded in $B(\Gamma)$.
\end{description}
\end{theorem}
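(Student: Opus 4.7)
I will prove the cycle in three stages, using Lemma~1 throughout to switch between the $\prec$-mapping condition and Lipschitz bounds with natural-number constants $m,m'$. First I treat the internal equivalence (ii) $\Leftrightarrow$ (iii). If $V = B(A,r)$ and $v \in S(a_{0},n)$, I pick $a_{k}$ with $d(v,a_{k}) \le r$; combining $d(a_{0},a_{k}) = k$ and $d(a_{0},v) = n$ with the triangle inequality gives $|n-k| \le r$, hence $d(v,a_{n}) \le d(v,a_{k}) + d(a_{k},a_{n}) \le 2r$. Thus every sphere sits in $B(a_{n},2r)$. Conversely, if each sphere $S(a_{0},n)$ is contained in a ball of radius $R$, then for any $v \in V$ with $n := d(a_{0},v)$ both $v$ and $a_{n}$ lie in this sphere, so $d(v,a_{n}) \le 2R$ and $V = B(A,2R)$.

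For (ii) $\Rightarrow$ (i) I construct an asymorphism explicitly. The bounded-degree hypothesis gives a uniform bound $N$ on $|B(v,r)|$, and I partition $V$ into shells along the arrow via $V_{n} := B(a_{n},r) \setminus \bigcup_{k<n} V_{k}$. Each $V_{n}$ has at most $N$ elements, and each is non-empty because $a_{n+r} \in V_{n}$: the arrow property gives $d(a_{k},a_{n+r}) = n+r-k > r$ whenever $k < n$. Listing $V_{0},V_{1},V_{2},\ldots$ in order (with arbitrary internal ordering) defines a bijection $f : V \to \omega$. If $u \in V_{i}$ and $v \in V_{j}$ are adjacent in $\Gamma$, the triangle inequality along $a_{i},u,v,a_{j}$ yields $|i-j| \le 2r+1$, so $|f(u)-f(v)| \le (2r+2)N$; if $|f(u)-f(v)| = 1$ then $u,v \in V_{n} \cup V_{n+1}$ for some $n$, and so $d(u,v) \le 2r+1$. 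By Lemma~1 both $f$ and $f^{-1}$ are $\prec$-mappings, so $f$ is an asymorphism.

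Finally, for (i) $\Rightarrow$ (ii) I fix an asymorphism $f : V \to \omega$ and set $\phi(n) := f(a_{n})$. Since $n \mapsto a_{n}$ is an isometric embedding of $\omega$ into $V$, Lipschitzity of $f$ gives $|\phi(n+1) - \phi(n)| \le m$, while Lipschitzity of $f^{-1}$ gives $|\phi(n) - \phi(0)| \ge n/m'$; the latter forces $\phi(n) \to \infty$. A discrete intermediate-value argument then shows $\phi(\omega)$ is $C$-dense in $\omega$ for $C := \max(m, \phi(0))$: for $y \ge \phi(0)$, choose $n^{*}$ minimal with $\phi(n^{*}) > y$, so that the jump $\phi(n^{*}) - \phi(n^{*}-1) \le m$ straddles $y$ and hence $|\phi(n^{*}) - y| \le m$; for $y < \phi(0)$ the point $\phi(0) \in \phi(\omega)$ is within $\phi(0)$ of $y$. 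Picking $n$ minimising $|f(v) - \phi(n)|$ and invoking the Lipschitz bound for $f^{-1}$ gives $d(v,A) \le m'C$ for every $v \in V$, so $V = B(A, m'C)$. I expect this last implication to be the main obstacle: one has to translate the purely categorical existence of an asymorphism into a geometric statement about $f(A)$, and doing so requires both Lipschitz directions simultaneously together with the half-line structure of $\omega$, which is what makes the left-tail case $y < \phi(0)$ manageable.
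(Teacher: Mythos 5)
Your proof is correct, and it is organized differently from the paper's, which runs the cycle $(i)\Rightarrow(ii)\Rightarrow(iii)\Rightarrow(i)$. You instead prove $(ii)\Leftrightarrow(iii)$ directly (the short triangle-inequality argument in both directions, whereas the paper only proves $(ii)\Rightarrow(iii)$ and gets the converse through the cycle), and you build the asymorphism from $(ii)$ rather than from $(iii)$: your bijection enumerates the shells $V_{n}=B(a_{n},r)\setminus\bigcup_{k<n}V_{k}$ along the arrow, while the paper enumerates the spheres $S(a_{0},0),S(a_{0},1),\dots$ and simply asserts that the resulting map is an asymorphism. Your route costs you the extra verification that each shell is non-empty (via $a_{n+r}\in V_{n}$) -- a point the sphere enumeration gets for free since $a_{n}\in S(a_{0},n)$ -- but it buys explicit Lipschitz constants $(2r+2)N$ and $2r+1$ and makes the application of Lemma~1 in both directions completely mechanical, filling in exactly the step the paper waves through with ``it follows clearly.'' For $(i)\Rightarrow(ii)$ the two arguments share the same core idea, namely that $f(A)$ is coarsely dense in $\omega$ and that $f^{-1}$ pulls an interval back into a ball; the difference is that the paper deduces $f(a_{n})\to\infty$ from injectivity of the sequence $(f(a_{n}))$ and covers $\omega$ by segments $[i,i+k]$ with $k=\max\{m,\min_{n}f(a_{n})\}$, whereas you use the Lipschitz bound for $f^{-1}$ to get the escape to infinity and then run a discrete intermediate-value argument; both are sound, and your constant $C=\max(m,\phi(0))$ plays exactly the role of the paper's $k$, with the left-tail case $y<\phi(0)$ corresponding to the paper's inclusion of $\min_{n}f(a_{n})$ in the maximum.
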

\begin{proof}
$(i)\Rightarrow (ii)$. Fix an asymorphism
$f:V\rightarrow \omega$ between $\mathcal{B}(\Gamma)$ and
$\mathcal{B}(\mathcal{R})$. Using Lemma1, we choose a natural
number $m$ such that $|f(u)-f(v)|\leq m$ for all $(u,v)\in E$.
Consider an injective sequence $(f(a_{n}))_{n\in \omega}$ in
$\omega$ and note that $|f(a_{n+1})-f(a_{n})|\leq m, n\in \omega$.
Put $k=max\{m,\displaystyle \min_{n\in \omega}f(a_{n})\}$ and note
that every segment $[i,i+k],i\in \omega$ contains at least one
element of the sequence $(f(a_{n}))_{n\in \omega}$. Since
$f^{-1}:\omega \rightarrow V$ is a $\prec$-mapping, there exists a
natural number $r$ such that $f^{-1}([i,i+k])\subseteq
B(f^{-1}(i),r)$ for all $i \in \omega$. Since $\omega
=\displaystyle \bigcup_{i\in \omega}[i,i+k]$, $f$ is a bijection
and every segment $[i,i+k]$ contains at least one element of the
sequence $(f(a_{n}))_{n\in \omega}$, then $V=\bigcup
B(a_{n},r)=B(A,r)$.\

$(ii)\Rightarrow (i)$. Fix an arbitrary number $n\in \omega$ and
note that \\ $S(a_{0},n)\bigcap B(a_{k},r)=\emptyset$, if
$|k-n|>r$. Since $V=B(A,r)$, then $S(a_{0},n)\subseteq \bigcup
\{B(a_{k},r):|k-n|\leq r \}\subseteq B(a_{0},2r)$ and the family
$\{S(a_{0},n):n\in \omega \}$ is uniformly bounded.\

$(iii)\Rightarrow (i)$. Define a bijection $f$ between $V$ an
$\omega$ in a such way: put $f(a_{0})=0$, and then numerate in an
arbitrary order the elements of $S(a_{0},1)$, then the elements of
$S(a_{0},2)$ and so on. It  follows clearly from the uniform
boundedness of the family $\{S(a_{0},n): n \in \omega \}$ and
$V=\displaystyle \bigcup_{n\in \omega}S(a_{0},n)$, that $f$ is an
asymorphism.
\end{proof}

In conclusion we precise Theorem1 for trees. Let $(a_{n})_{n\in
\omega}$ be an arrow in some tree $T$. After deletion of edges
(but not vertices) of the arrow, the tree  $T$ disintegrates into
trees $T(a_{n})$ with the roots $a_{n}$,$n\in \omega$.

\begin{theorem}
Let $T(V,E)$ be an infinite tree, $s$
be a natural number such that $\rho(v)\leq s$, for all $v\in V$,
and let $(a_{n})_{n\in \omega}$ be an arrow in T, $A=\{a_{n}:n\in
\omega \}$. The tree $T$ is an asymptotic ray iff there exists a
natural number $t$, such that $|V(T(a_{n}))|\leq t$ for all $n\in
\omega$, where $V(T(a_{n}))$ is the set of vertices of $T(a_{n})$.
\end{theorem}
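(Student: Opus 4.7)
The plan is to leverage Theorem~1: once the bounded-degree hypothesis $\rho(v)\leq s$ is in place, $T$ is an asymptotic ray if and only if there exists $r\in\omega$ with $V=B(A,r)$. So my task reduces to showing that for a tree with $\rho(v)\leq s$, the condition ``$V=B(A,r)$ for some $r$'' is equivalent to ``$|V(T(a_n))|\leq t$ for some $t$, uniformly in~$n$''.

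For the forward direction, assume $V=B(A,r)$ and pick $v\in V(T(a_n))$. The crucial tree-theoretic observation is that $T(a_n)$, being the connected component containing $a_n$ after removing the arrow edges, is joined to the rest of $T$ only through edges of the arrow at $a_n$. Since $T$ is a tree, paths are unique, and hence for every $k$ the unique $T$-path from $v$ to $a_k$ decomposes as a path from $v$ to $a_n$ inside $T(a_n)$ followed by the arrow segment from $a_n$ to $a_k$ of length $|n-k|$. Therefore $d(v,a_k)=d(v,a_n)+|n-k|$. Choosing $k$ so that $v\in B(a_k,r)$ gives $d(v,a_n)\leq r$, so $T(a_n)\subseteq B(a_n,r)$ in $T$. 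Using $\rho\leq s$, a standard counting in a tree bounds $|B(a_n,r)|$ by $1+s+s(s-1)+\cdots+s(s-1)^{r-1}$, which is a constant independent of $n$; this is the desired $t$.

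For the reverse direction, assume $|V(T(a_n))|\leq t$ for all $n$. Each $T(a_n)$ is a (connected) tree on at most $t$ vertices, so its diameter is at most $t-1$, and in particular every $v\in V(T(a_n))$ satisfies $d(v,a_n)\leq t-1$. Since $V=\bigcup_n V(T(a_n))$, we get $V=B(A,t-1)$, and Theorem~1 immediately yields that $T$ is an asymptotic ray.

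The only step that is not a direct quotation of Theorem~1 or a routine ball-counting estimate is the structural claim that for $v\in T(a_n)$ the unique $T$-path to $a_k$ runs through $a_n$ and then along the arrow; I expect this to be the one place where both the uniqueness of paths in a tree and the definition of the $T(a_n)$ as components of $T$ minus the arrow edges must be used together, and once it is nailed down the rest of the argument is essentially bookkeeping.
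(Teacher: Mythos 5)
Your argument is correct and follows essentially the same route as the paper: reduce to Theorem~1's criterion $V=B(A,r)$, show $T(a_n)\subseteq B(a_n,r)$ using the tree structure and bound $|B(a_n,r)|$ via the degree bound for one direction, and bound $d(v,a_n)$ by $|V(T(a_n))|$ to get $V=B(A,t)$ for the other. Your write-up merely makes explicit (via the unique-path-through-$a_n$ claim) the step the paper compresses into ``Since $T$ is a tree, $T(a_n)\subseteq B(a_n,r)$.''
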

\begin{proof}
Let $T$ be an asymptotic ray. Using Theorem1,
choose a natural number $r$ such that $V=B(A,r)$. Since $T$ is a
tree, $T(a_{n})\subseteq B(a_{n},r)$ for all $n\in \omega$. Since
$\rho(v)\leq s$ for all  $v\in V$, then $|B(a_{n},r)|\leq
s^{r}+1$. If $|V(T(a_{n}))|\leq t$ for all $v\in \omega$, then
$T(a_{n})\subseteq B(a_{n},t)$, $V=B(A,t)$ and we can use
Theorem1.
\end{proof}

\thispagestyle{empty}

\begin{center} \section*{Acknowledgements} \end{center}  We would like to
acknowledge Prof. Ihor Protasov from the National Taras Shevchenko
University of Kiev for guiding this research.

\

\begin{center} 
\end{center}
\end{document}